\newcommand{\qf}[1]{\mbox{$\langle #1\rangle $}}
\newcommand{\pff}[1]{\mbox{$\langle\!\langle #1\rangle\!\rangle $}}
\DeclareMathOperator{\ani}{an}
\DeclareMathOperator{\sgn}{sgn}
\DeclareMathOperator{\Br}{Br}
\newcommand{\hn}{\widetilde{u}}
\newcommand{\CC}{{\mathbb C}}
\newcommand{\NN}{{\mathbb N}}
\newcommand{\QQ}{{\mathbb Q}}
\newcommand{\ZZ}{{\mathbb Z}}
\renewcommand{\phi}{\varphi}
\newcommand{\ignore}[1]{}
\newcommand{\sos}[1]{D_{#1}(\infty)}
\theoremstyle{plain}
\newtheorem{thm}{Theorem}[section]
\newtheorem{lem}[thm]{Lemma}
\newtheorem{prop}[thm]{Proposition}
\newtheorem{cor}[thm]{Corollary}
\theoremstyle{definition}
\theoremstyle{remark}
\newtheorem{rem}[thm]{Remark}
\begin{document}
\title[Anisotropic indefinite quadratic forms]{Dimensions of 
anisotropic indefinite quadratic forms II --- The lost proofs}
\author[D.W.~Hoffmann]{Detlev W.~Hoffmann}
\address{School of Mathematical Sciences, University of Nottingham, University Park,
Nottingham NG7 2RD, UK} 
\email{detlev.hoffmann@nottingham.ac.uk}
\date{}
\keywords{quadratic form, Pfister form, Pfister neighbor, real field, 
ordering, strong approximation property, 
effective diagonalization, $u$-invariant, Hasse number, Pythagoras number,
function field of a quadratic form}
\subjclass[2000]{primary: 11E04; secondary: 11E10, 11E81}
\begin{abstract}
Let $F$ be a field of characteristic different from $2$.  The $u$-invariant and the
Hasse number $\hn$ of a field $F$ are classical and important field invariants 
pertaining
to quadratic forms.  These invariants measure the suprema of dimensions
of anisotropic forms over $F$ that satisfy certain additional properties.

We construct various examples of fields with infinite Hasse number and
prescribed finite values of $u$ that satisfy additional properties
pertaining to the space of orderings of the field.    We also
construct to each $n\in\NN$ a real field $F$ such that $\hn(F)=2^{n+1}$
and each quadratic form over $F$ of dimension $2^n+1$ is a Pfister neighbor. 
These results were announced
(without proof) in \cite{h-indef2}.
\end{abstract}
\maketitle

\section{Introduction}\label{sec-intro}

Throughout this paper, fields are assumed to be of characteristic different
from $2$ and quadratic forms over a field are always assumed to be 
finite-dimensional and nondegenerate.   In this article, we prove some
of the results announced without proof in \cite{h-indef2}.  We refer
to that article (and to \cite{l05}) for all terminology used in the present
paper.

In the next section, we construct real fields $F$ with Hasse number $\hn (F)=\infty$
for each possible pair of values $(p,u)$ such that $p(F)=p$ and $u(F)=u$
(where $p(F)$ resp.\ $u(F)$ are the Pythagoras number resp.\ $u$-invariant
of $F$) and such that in addition $F$ satisfies SAP but not the property $S_1$ or
vice versa, $F$ satisfies $S_1$ but not SAP.

Recall from \cite{h-indef2} that a field $F$ is said
to have property $PN(n)$ if each form of dimension $2^n+1$ over $F$ 
is a Pfister neighbor.  It was shown there that all fields $F$ with 
$\hn(F)\leq 2^n$ have property $PN(n)$, and if $F$ is a field with
$PN(n)$, $n\geq 2$, then $u(F)\leq \hn(F)\leq 2^n$ or 
$2^{n+1}\leq u(F)\leq\hn(F)\leq 2^{n+1}+2^n-2$.  We conjecture that each field
with $PN(n)$, $n\geq 2$, satisfies $\hn(F)\leq 2^n$ or $\hn(F)=2^{n+1}$.  
In the third section, we construct to any $n\geq 2$ a real field that 
satisfies $PN(n)$
and $\hn(F)=2^{n+1}$, showing that the conjectured upper bound 
can in fact be realized for real fields.

All our constructions use variations of Merkurjev's method of iterated function
fields.

\section{Fields with finite $u$-invariant and infinite Hasse number}
In \cite[\S 5]{ep}, one finds examples of non-SAP fields $F$
with prescribed $u$-invariant $2^n$, $n\geq 1$.  These examples were
obtained using the method of intersection of henselian fields
(cf. \cite{pr}).  In this section, we will apply Merkurjev's method
of constructing fields with even $u$-invariant and modify it in a way
such that these fields will be real and such that either they
will be non-SAP or they will not have the property $S_1$.   
Since fields with finite hasse number are always SAP and $S_1$, the
fields we contruct will have inifinte Hasse number.
It furthermore illustrates the independence
of the properties SAP and $S_1$.

Let us first recall some well known results and some special cases of 
Merkurjev's index reduction theorem which we will use in the sequel.
We refer to \cite{m91}, \cite{t90} for details.  See also 
\cite[Ch.\,V.3]{l05} for basic results on Clifford invariants 
$c(q)\in _2\!\Br(F)$ for quadratic forms $q$ over $F$
and how to compute them, and \cite[Ch.\,X]{l05} for basic results on
function fields $F(q)$ of quadratic forms $q$ over $F$.

\begin{lem}\label{albert}
\begin{itemize}\item[(i)]  Let $Q_i=(a_i,b_i)$, $1\leq i\leq n$,
be quaternion algebras over $F$ with associated norm forms $\pff{a_i,b_i}
\in P_2F$. Let $A=\bigotimes_{i=1}^nQ_i$ (over $F$).
Then there exist $r_i\in F^*$, 
$1\leq i\leq n$, and a form $q\in I^2F$, $\dim q =2n+2$ 
such that $c(q)=[A]\in \Br_2F$ and 
$q = \sum_{i=1}^n x_i\pff{a_i,b_i}$ in $WF$. (We will call such
a form $q$ an {\em Albert form} associated to $A$.) 
Furthermore, if $A$ is not Brauer equivalent to a product of $<n$
quaternion algebras (in particular if $A$ is a division algebra), 
then every Albert form 
associated to $A$ is anisotropic.
\item[(ii)] If $q$ is a form over $F$ with either $\dim q=2n+2$ and 
$q\in I^2F$, or $\dim q=2n+1$, or $\dim q=2n$ and $d_{\pm}q\neq 1$, then
there exist quaternion algebras $Q_i=(a_i,b_i)$, $1\leq i\leq n$,
such that for $A=\bigotimes_{i=1}^nQ_i$ we have $c(q)=[A]$, and
there exists an Albert form $\phi$ associated to $A$ 
such that $q\subset\phi$.
\item[(iii)] If $A$ is a division algebra and if $\psi$ is a form
over $F$ of one of the following types:
\begin{itemize}
\item[(a)] $\dim\psi\geq 2n+3$,
\item[(b)] $\dim\psi =2n+2$ and $d_{\pm}\psi\neq 1$,
\item[(c)] $\dim\psi =2n+2$, $d_{\pm}\psi =1$ and $c(\psi )\neq [A]\in \Br_2F$,
\item[(d)] $\psi\in I^3F$,
\end{itemize}
then $A$ stays a division algebra over $F(\psi )$.
\end{itemize}
\end{lem}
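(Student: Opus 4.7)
For part (i), I exploit the fact that each norm form $\pff{a_i,b_i}=\qf{1,-a_i,-b_i,a_ib_i}$ lies in $I^2F$ with Clifford invariant $[Q_i]$, so the orthogonal sum $\pff{a_1,b_1}\perp\cdots\perp\pff{a_n,b_n}$ lies in $I^2F$ with Clifford invariant $[A]$ and has dimension $4n$. I then reduce modulo hyperbolic planes: Pfister forms are round, so one can inductively pick scalars $x_i\in F^*$ (with $x_1=1$) such that $x_{i+1}\pff{a_{i+1},b_{i+1}}$ represents an element whose negative is represented by the running partial sum, splitting off a hyperbolic plane at each step. After $n-1$ such reductions the Witt class $\sum x_i\pff{a_i,b_i}$ is represented by a form $q$ of dimension $2n+2$ still in $I^2F$ and still with Clifford invariant $[A]$. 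When $A$ is not Brauer equivalent to a product of fewer than $n$ quaternion algebras, anisotropy follows from the standard inequality relating the dimension of an $I^2F$-form to the index of its Clifford invariant: if $q$ were isotropic then $\an{q}\in I^2F$ would have dimension $\le 2n$ and Clifford invariant $[A]$, contradicting $\mathrm{ind}(A)=2^n$.

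For part (ii), the plan is first to enlarge $q$ to a form $\phi\in I^2F$ of dimension $2n+2$ containing $q$. If $\dim q=2n+2$ and $q\in I^2F$ take $\phi=q$; if $\dim q=2n+1$ take $\phi=q\perp\qf{c}$ with $c=(-1)^{n+1}d_{\pm}q$; if $\dim q=2n$ and $d_{\pm}q\ne 1$ take $\phi=q\perp\qf{c_1,c_2}$ with $c_1,c_2$ chosen so that the signed discriminant of $\phi$ becomes trivial. Then $c(\phi)=c(q)\in\Br_2 F$, and by Merkurjev's theorem this class is a product of at most $n$ quaternion-algebra classes $[(a_i,b_i)]$ (pad with split quaternions if strictly fewer suffice). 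Applying part (i) to these quaternion algebras produces an Albert form $\phi'$ of the required type; since two $I^2F$-forms of equal dimension $2n+2$ with the same Clifford invariant are similar, one can arrange $\phi$ itself to be this Albert form, and by construction $q\subset\phi$.

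For part (iii), I would appeal directly to Merkurjev's index reduction theorem for products of quaternion algebras as given in \cite{m91}, \cite{t90}: the index of $A$ drops over $F(\psi)$ only when $\psi$ is (similar to) a subform of an Albert form associated to $A$ or to a proper Brauer sub-product, which in particular forces $\dim\psi\le 2n+2$ together with sharp constraints on $d_{\pm}\psi$ and $c(\psi)$. Conditions (a)--(d) are designed precisely to exclude every such exceptional $\psi$: case (a) puts $\dim\psi$ above the dimension $2n+2$ of every Albert form of $A$; case (b) blocks $\psi$ from being an Albert form by its nontrivial signed discriminant; case (c) rules out the remaining dimension-$2n+2$ possibility by enforcing $c(\psi)\ne[A]$; and case (d) uses the Arason--Pfister Hauptsatz, which forces $\dim\psi\ge 8$ and $c(\psi)=0$, so $\psi$ cannot trigger index reduction either.

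The main obstacle lies in part (iii): the bookkeeping that matches the exceptional forms in the statement of Merkurjev's index reduction theorem precisely with the negations of (a)--(d), especially verifying that no low-dimensional borderline $\psi$ with an unexpected Clifford invariant slips through. Parts (i) and (ii) are essentially direct manipulations with Pfister forms, Clifford invariants, and Merkurjev's theorem on generators of $\Br_2F$, while (iii) is where the genuine structural input enters.
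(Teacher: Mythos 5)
The paper itself gives no proof of this lemma; it is explicitly presented as a recollection of ``well known results and some special cases of Merkurjev's index reduction theorem,'' with a pointer to \cite{m91} and \cite{t90}. So there is no in-paper argument to compare against, only the literature. Evaluating your sketch on its own merits: part (i) is correct and standard. The inductive construction, splitting off one hyperbolic plane at each stage by choosing $x_{k+1}$ so that $x_{k+1}\pff{a_{k+1},b_{k+1}}$ and the running partial sum represent a common value up to sign, does yield a form $q$ of dimension $2n+2$ in $I^2F$ with $c(q)=[A]$ and $q=\sum x_i\pff{a_i,b_i}$ in $WF$. The anisotropy claim as you phrase it only covers the division case (you argue against $\mathrm{ind}(A)=2^n$), but the general statement follows the same way once one observes that the anisotropic part of an isotropic $q$ would have dimension $2m+2$ with $m<n$ and Clifford invariant $[A]$, and part (ii) then exhibits $[A]$ as a product of $\le m<n$ quaternion classes. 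Part (iii) is, as you say, a direct appeal to the Merkurjev--Tignol index reduction theorem, which is exactly what the paper does.

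Part (ii), however, contains a genuine gap. You enlarge $q$ to a form $\phi\in I^2F$ of dimension $2n+2$ with $q\subset\phi$, then use Merkurjev's theorem to write $c(\phi)$ as a product of $n$ quaternion classes, apply part (i) to get an Albert form $\phi'$, and conclude via the assertion that ``two $I^2F$-forms of equal dimension $2n+2$ with the same Clifford invariant are similar.'' That assertion is false as soon as $\dim\phi\geq 8$ (i.e.\ $n\geq 3$): already for $n=3$, two non-similar anisotropic $3$-fold Pfister forms $\pi_1,\pi_2$ (for instance $\pff{1,1,1}$ and $\pff{x,y,z}$ over $\QQ(x,y,z)$) both lie in $I^2F$, have dimension $8$, and have trivial Clifford invariant, yet are not similar. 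The claim is a theorem of Jacobson/Albert only in dimension $6$, not in general. The correct argument for (ii) is direct and inductive: given $\phi\in I^2F$ with $\dim\phi=2n+2$, write $\phi\cong\qf{a,b,c}\perp\phi''$ and note that $\qf{a,b,c}\subset a\pff{-ab,-ac}$ and that $\phi - a\pff{-ab,-ac}=\phi''\perp\qf{-abc}$ in $WF$ is again in $I^2F$ of dimension $2n$; induction then produces the required quaternion algebras $Q_i$ (depending on $\phi$ itself, not just on $c(\phi)$) and scalars $x_i$ so that $\phi$ is an Albert form for $\bigotimes Q_i$, whence $q\subset\phi$ as required. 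Your route through an abstract decomposition of $c(\phi)$ followed by a similarity step cannot be repaired, because the quaternion decomposition of $[A]$ obtained from Merkurjev's theorem has no reason to be compatible with the Witt-class expression of $\phi$ as a sum of scaled $2$-fold Pfister forms.
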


Let us also recall some basic facts on the property SAP and
weakly isotropic forms which we  will use and which are
essentially well known.  Recall that a form $q$ over $F$ is called
weakly isotropic if $n\times q$ is isotropic for some $n\geq 1$
(over nonreal $F$, all forms are clearly weakly isotropic as
$WF=W_tF$),

\begin{lem}\label{SAP-wi} \begin{itemize}\item[(i)]
$F$ is SAP if and only if for every $a,b\in F^*$ the form
$\qf{1,a,b,-ab}$ is weakly isotropic.
\item[(ii)] Suppose that $a,b\in F^*$ are such that
$\qf{1,a,b,-ab}$ is not weakly isotropic.  Let $t\in\sos{F}$.  
Then $\qf{1,a,b,-ab}_{F(\sqrt{t})}$
is not weakly isotropic.
\end{itemize}
\end{lem}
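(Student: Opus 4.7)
For part (i), my plan is to invoke and briefly recast the classical Prestel--Ware characterization of SAP: $F$ is SAP if and only if every Harrison set $H(c)=\{P\in X_F:c>_P 0\}$ is clopen, equivalently if and only if for all $a,b\in F^*$ there exists $c\in F^*$ with $H(c)=H(a)\cup H(b)$. The direction ($\Rightarrow$) I would handle quickly by observing that a direct signature count shows $\qf{1,a,b,-ab}$ has signature $\pm 2$ at every ordering $P$, hence is totally indefinite, and totally indefinite forms over SAP fields are weakly isotropic (a standard consequence of Pfister's local--global principle). For ($\Leftarrow$), I would convert a weak isotropy relation $N\times\qf{1,a,b,-ab}\cong 0$ in $WF$ into an explicit separating element $c\in F^*$ whose Harrison set is $H(a)\cup H(b)$, and then conclude SAP from the Harrison criterion.

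For part (ii), my plan is a direct Witt-ring argument using the Springer-type isotropy criterion for quadratic extensions. Suppose for contradiction that $\phi:=N\times\qf{1,a,b,-ab}$ is isotropic over $F(\sqrt{t})$ for some $N\geq 1$. Since $\qf{1,a,b,-ab}$ is not weakly isotropic over $F$, the form $\phi$ is anisotropic over $F$. By the standard criterion (see, e.g., \cite[Ch.\,VII]{l05}), $\phi$ must then contain a binary subform isometric to $\qf{\alpha,-\alpha t}$ for some $\alpha\in F^*$; in particular $\phi$ represents both $\alpha$ and $-\alpha t$ by (necessarily nonzero) vectors.

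The hypothesis $t\in\sos{F}$ now enters: write $t=s_1^2+\cdots+s_m^2$ with $s_i\in F^*$. If $\phi(v)=\alpha$, then $\phi(s_iv)=\alpha s_i^2$, so $m\times\phi$ represents $\alpha t=\sum_i\alpha s_i^2$ nontrivially. Coupling this with a vector $u$ satisfying $\phi(u)=-\alpha t$ yields the nontrivial isotropic vector $(s_1v,\ldots,s_mv,u)$ of $(m+1)\times\phi=(m+1)N\times\qf{1,a,b,-ab}$ over $F$; nontriviality is immediate from $u\ne 0$. Hence $\qf{1,a,b,-ab}$ is weakly isotropic over $F$, contradicting the hypothesis.

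I expect the only step that is more than routine bookkeeping to be the unpacking of the Prestel--Ware equivalence in part (i), where one has to move carefully between the Harrison-set picture and the explicit weak isotropy of the test form $\qf{1,a,b,-ab}$. The remainder of (i) and all of (ii) reduce to standard manipulations involving sums of squares, Witt cancellation, and Springer's quadratic-extension isotropy criterion.
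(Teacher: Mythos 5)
For part (ii), your proof is correct and takes essentially the same route as the paper's: both arguments (a) observe that $N\times\qf{1,a,b,-ab}$ is anisotropic over $F$ by hypothesis, (b) apply the standard quadratic-extension isotropy criterion to extract a binary subform $\qf{\alpha,-\alpha t}$, and (c) exploit $t\in\sos{F}$ to force weak isotropy back over $F$. The only cosmetic difference is that the paper observes directly that $m\times\qf{1,-t}$ is isotropic (where $t$ is a sum of $m$ squares) and hence that $mN\times\qf{1,a,b,-ab}\supset m\times\alpha\qf{1,-t}$ is isotropic, while you build the isotropic vector of $(m+1)N\times\qf{1,a,b,-ab}$ by hand from the representations of $\alpha$ and $-\alpha t$; these are the same calculation, and your version is sound. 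One minor imprecision: you write $t=s_1^2+\cdots+s_m^2$ with $s_i\in F^*$, but some $s_i$ may vanish; since $u\neq0$ supplies the nonzero coordinate, the conclusion is unaffected.

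For part (i), however, your sketch is essentially circular. The paper simply cites \cite[Satz~3.1]{pr73} and \cite[Th.~C]{elp}, because the equivalence ``$F$ is SAP $\iff$ every form $\qf{1,a,b,-ab}$ is weakly isotropic'' \emph{is} that theorem of Prestel (and Elman--Lam--Prestel). Your proposed $(\Rightarrow)$ argument appeals to the assertion ``totally indefinite forms over SAP fields are weakly isotropic,'' which you attribute to Pfister's local--global principle; but Pfister's LGP only characterizes torsion classes (signature zero at all orderings), not weak isotropy of forms with nonzero total signature. The implication ``SAP $+$ t.i.\ $\Rightarrow$ weakly isotropic'' is precisely the nontrivial content of the Prestel/ELP theorem, so invoking it to prove (i) is not an independent proof. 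The $(\Leftarrow)$ step is likewise only a plan to ``convert a weak isotropy relation into a separating element,'' which is the other substantive half of the same theorem. You would do better here to follow the paper and simply cite the result rather than attempt to reprove it.
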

\begin{proof}
(i) See \cite[Satz~3.1]{pr73}, \cite[Th.~C]{elp}.

(ii) Suppose $\qf{1,a,b,-ab}_{F(\sqrt{t})}$ is weakly isotropic.
Then there exists an integer $n\geq 1$ such that 
$n\times\qf{1,a,b,-ab}_{F(\sqrt{t})}$ is isotropic.
The isotropy over
$F(\sqrt{t})$ implies that $n\times\qf{1,a,b,-ab}$ contains a
subform similar to $\qf{1,-t}$ (see, e.g., \cite[Ch.\,VII, Th.\,3.1]{l05}).
Since $t$ is totally positive, it can be written as a sum of, say,
$m$ squares in $F$.  But then $m\times\qf{1,-t}$ is isotropic.
Hence $mn\times\qf{1,a,b,-ab}$ is isotropic and thus $\qf{1,a,b,-ab}$ is
weakly isotropic.
\end{proof}

\begin{cor}\label{SAP-pyth} 
Suppose that $a,b\in F^*$ are such that
$\qf{1,a,b,-ab}$ is not weakly isotropic.
\begin{itemize}\item[(i)]  Let $F_{pyth}$ be the
pythagorean closure of $F$ (inside some algebraic closure of $F$). 
Then $\qf{1,a,b,-ab}_{F_{pyth}}$ is not weakly isotropic.
In particular, if $F$ is not SAP, then $F_{pyth}$ is not SAP.
\item[(ii)] Let $\psi$ be a form over $F$ such that $\psi$ is 
isotropic over $F_{pyth}$.  Then $\qf{1,a,b,-ab}_{F(\psi)}$ is not
weakly isotropic.  In particular, if $F$ is not SAP, then $F(\psi )$ 
is not SAP. This is always the case if 
$\psi$ contains a subform $\tau$, $\dim\tau\geq 2$, such that
$|\sgn_P(\tau )|\leq 1$ for all orderings $P$ of $F$.
\end{itemize}
\end{cor}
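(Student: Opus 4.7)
The plan is to deduce both parts of the corollary from Lemma~\ref{SAP-wi}(ii) by exploiting the description of the pythagorean closure as a direct limit of iterated square-root extensions by sums of squares.

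For (i), I realize $F_{pyth}=\bigcup_{i\ge 0}F_i$ with $F_0=F$ and each $F_{i+1}=F_i(\sqrt{t_i})$ for some $t_i\in\sos{F_i}$. An induction using Lemma~\ref{SAP-wi}(ii) then shows that $\qf{1,a,b,-ab}$ remains not weakly isotropic over every $F_i$. Since any hypothetical weak isotropy over $F_{pyth}$ is witnessed by an isotropic vector with finitely many coefficients, it would already be realized at some finite stage $F_m$; hence $\qf{1,a,b,-ab}$ is not weakly isotropic over $F_{pyth}$. The SAP consequence then follows from Lemma~\ref{SAP-wi}(i), since non-SAP for $F$ is witnessed by the existence of such $a,b$, and that witness persists in $F_{pyth}$.

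For the main part of (ii), suppose $\psi$ is isotropic over $F_{pyth}$; then $F_{pyth}(\psi)$ is purely transcendental over $F_{pyth}$, being the function field of an isotropic quadric. By (i), $\qf{1,a,b,-ab}$ is not weakly isotropic over $F_{pyth}$, and a standard specialization (or Cassels--Pfister) argument shows that anisotropy of the orthogonal sums $n\times\qf{1,a,b,-ab}$ is preserved under purely transcendental extensions. Hence $\qf{1,a,b,-ab}$ is not weakly isotropic over $F_{pyth}(\psi)$, and therefore also over its subfield $F(\psi)$. The non-SAP consequence again follows from Lemma~\ref{SAP-wi}(i).

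The final ``This is always the case'' clause reduces to showing that the stated hypothesis on $\tau$ forces $\tau$ (hence $\psi$) to become isotropic over $F_{pyth}$. The conditions $\dim\tau\ge 2$ and $|\sgn_P(\tau)|\le 1$ ensure that $\tau$ is isotropic at every real closure of $F$, since there $\tau$ is diagonal with at least one entry of each sign. The task is then to upgrade this local isotropy to isotropy over $F_{pyth}$ itself, using that orderings of $F_{pyth}$ correspond canonically to those of $F$ and that every totally positive element of $F$ becomes a square in $F_{pyth}$. Concretely, one tries to locate in $\tau$, possibly after an intermediate $\sqrt{t}$-extension with $t\in\sos{}$, a two-dimensional subform similar to $\qf{1,-s}$ with $s$ a sum of squares, which then splits off a hyperbolic plane in $F_{pyth}$. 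This bridging step between local and global isotropy is the delicate point of the argument and is the main technical obstacle I expect to have to handle.
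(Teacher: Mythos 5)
Parts (i) and the main part of (ii) are correct and follow the paper's proof essentially verbatim: describe $F_{pyth}$ as a direct limit of iterated $\sqrt{t}$-extensions with $t$ totally positive, induct with Lemma~\ref{SAP-wi}(ii), use that weak isotropy is witnessed at a finite stage, and for (ii) use that $F_{pyth}(\psi)/F_{pyth}$ is purely transcendental.

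The final clause is where you stop short, and you say so yourself (``the main technical obstacle I expect to have to handle''). That is a genuine gap: the step from ``$\tau$ is isotropic at every real closure'' to ``$\tau_{F_{pyth}}$ is isotropic'' is not a formality, and your sketched route --- finding a binary torsion subform $\qf{1,-s}$ inside $\tau$, possibly after an auxiliary square-root extension --- is neither carried out nor obviously available. The paper's actual argument avoids any local-to-global isotropy principle. Since $\sgn_P(\tau)\equiv\dim\tau\pmod 2$, the hypothesis $|\sgn_P(\tau)|\le 1$ forces exactly one of two situations uniformly in $P$. If $\sgn_P(\tau)=0$ for all $P$, then $\tau\in W_tF$ and hence $\tau_{F_{pyth}}$ is hyperbolic (over the pythagorean closure all torsion dies), so $\psi_{F_{pyth}}$ is isotropic. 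If $|\sgn_P(\tau)|=1$ for all $P$, then $\dim\tau$ is odd and $\ge 3$; pick $d\in F^*$ with $q:=\tau\perp\qf{d}\in I^2F$. Then $\sgn_P(q)\in 4\ZZ$ while $|\sgn_P(q)|\le 2$, so $\sgn_P(q)=0$ for all $P$, i.e.\ $q\in I^2_tF$. Hence $q_{F_{pyth}}$ is hyperbolic, and since $\dim q\ge 4$ the codimension-one subform $\tau_{F_{pyth}}$ is isotropic, so again $\psi_{F_{pyth}}$ is isotropic. The key idea you were missing is to pass to a one-dimension-larger torsion superform of $\tau$ rather than to hunt for a torsion subform, and to rely on the fact that torsion forms become hyperbolic over $F_{pyth}$.
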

\begin{proof}
(i) follows immediately from the previous lemma and the fact
that $F_{pyth}$ can be obtained as the compositum of all extensions
$K/F$  (inside an algebraic closure of $F$) which are of the form
$F=F_0\subset F_1\subset F_2\subset\ldots\subset F_n=K$ for some
$n$, where $F_{i+1}=F_i(\sqrt{1+a_i^2})$ for some $a_i\in F_i$.

(ii) Since $\psi$ is isotropic over $F_{pyth}$, the extension
$F_{pyth}(\psi )/F_{pyth}$ is purely transcendental.  Then
$\qf{1,a,b,-ab}_{F_{pyth}(\psi )}$ is not weakly isotropic
because $\qf{1,a,b,-ab}_{F_{pyth}}$ is not weakly isotropic
and because anisotropic forms (here, $n\times\qf{1,a,b,-ab}_{F_{pyth}}$)
stay anisotropic over purely transcendental extensions.

Now suppose $\psi$ has a subform $\tau$ with $\dim\tau\geq 2$ and
$|\sgn_P(\tau )|\leq 1$ for all orderings $P$ of $F$.
Since $\dim\tau\equiv\sgn_P(\tau )\pmod 2$, we have two cases.
If $\sgn_P(\tau )=0$ for all $P$, then $\tau\in W_tF$.  Hence
$\tau_{F_{pyth}}$ is hyperbolic and $\psi_{F_{pyth}}$ is isotropic.

If $|\sgn_P(\tau )|=1$ for all $P$ (which implies that $\dim\tau$ is odd 
and $\geq 3$), then let $d\in F^*$ such that
$q=\tau\perp\qf{d}\in I^2F$.  It follows readily that in fact
$q=\tau\perp\qf{d}\in I^2_tF$.  Thus, $q_{F_{pyth}}$ is hyperbolic 
and the codimension $1$ subform $\tau_{F_{pyth}}$ is isotropic. 
Again, $\psi_{F_{pyth}}$ is isotropic.
\end{proof}

\begin{thm}\label{u-nonSAP}  Let ${\mathcal N}'$ be the set of
pairs of integers $(p,u)$ such that either $p=1$ and $u=0$
or $u=2n\geq 2^m\geq p\geq 2$ for some integers $m$ and $n$.
Let ${\mathcal N}={\mathcal N}'\cup\{ (p,\infty );\ p\geq 2\ \mbox{or}\ p=\infty\}$.
\begin{itemize}
\item[(i)] If $F$ is a real field, then $(p(F),u(F))\in {\mathcal N}$.
If in addition $I^k_tF=0$ then $p(F)\leq 2^{k-1}$.
\item[(ii)] Let $E$ be a real field and let $(p,u)\in {\mathcal N}$.  
Then there exists a real field extension $F/E$ such that
$F$ is non-SAP, $F$ has property $S_1$ and $(p(F),u(F))=(p,u)$. 
In particular, $\hn (F)=\infty$.
\item[(iii)] Let $E$ be a real field and let $(p,u)\in {\mathcal N}$
such that $p\leq 2^{k-1}$, $k\geq 1$.  Then there exists a  real
field extension $F/E$ such that $F$ is non-SAP, $F$ has property $S_1$,
$I^k_tF=0$ and $(p(F),u(F))=(p,u)$.  In particular, $\hn (F)=\infty$. 
\end{itemize}
\end{thm}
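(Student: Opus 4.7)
Part (i) records well-known constraints on real-field invariants. That $u(F)$ is either $0$ or even follows because every anisotropic torsion form over a real field lies in $I_tF$ and has even dimension. The inequality $p(F)\leq 2^m \leq u(F)$ for $p(F)\geq 2$ is the standard Pfister-theoretic bound: a sum of squares of minimal length $p$ yields an anisotropic torsion Pfister neighbour of dimension a power of $2$ at most $u(F)$. The implication $I^k_tF=0 \Rightarrow p(F)\leq 2^{k-1}$ is the Elman--Lam bound, since a sum of more than $2^{k-1}$ squares of minimal length produces an anisotropic torsion form in $I^k F$.

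For (ii) and (iii) the plan is a Merkurjev-style iterated function field construction. Starting from $E_0 = E(x_1,\ldots,x_{2n},a,b,z_1,\ldots,z_{p-1})$ with algebraically independent transcendentals, designate three data: a non-SAP witness $\alpha = \qf{1,a,b,-ab}$; a Pythagoras witness $s = 1+z_1^2+\cdots+z_{p-1}^2$ that we force to have minimal sum-of-squares length $p$; and an Albert form $\rho$ of dimension $u=2n$ whose Clifford algebra $A=\bigotimes_{i=1}^n Q_i$, built from quaternion symbols in the $x_i$, is a division algebra over $E_0$, so $\rho$ is anisotropic by Lemma~\ref{albert}(i). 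Then build $F = \varinjlim_i E_i$ by iteratively adjoining function fields $E_i(\phi)$ for $\phi$ ranging over (a) every form of dimension $u+1$, (b) witnesses to $S_1$-violations, and (for (iii)) (c) every form in $I^k_tE_i$. The cases $u=\infty$ or $p=\infty$ correspond to omitting the relevant killing step.

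Three preservation requirements govern which forms are actually included at each stage. First, $\rho$ must remain anisotropic, which by Lemma~\ref{albert}(iii) requires every killed $\phi$ to lie in one of the cases (a)--(d) relative to the division algebra $A$. Second, $\alpha$ must remain non-weakly-isotropic; by Corollary~\ref{SAP-pyth}(ii), it suffices that each killed $\phi$ be isotropic over $F_{pyth}$. This is automatic for torsion forms (case (c)) since $WF_{pyth}$ is torsion-free, and is arranged for the other cases by replacing a target $\phi$ with an enlarged form $\phi \perp \tau$ where $\tau$ is a two-dimensional subform with $|\sgn_P(\tau)|\leq 1$ for all orderings $P$, appealing to the second clause of Corollary~\ref{SAP-pyth}(ii); killing $\phi \perp \tau$ kills $\phi$. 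Third, the minimal length of $s$ as a sum of squares stays equal to $p$, again monitored by Lemma~\ref{albert}(iii) applied to the Pfister neighbour encoding a shorter representation.

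The principal obstacle is the simultaneous compatibility of these three preservation conditions: the division algebra $A$ encoding $u(F)=u$ must survive every function-field extension, including those enforcing $p(F)=p$ and $S_1$; the non-SAP witness $\alpha$ must survive the entire tower, which constrains the available enlargements $\phi \perp \tau$; and $s$ must retain minimal length against the enumeration of shorter-sum-of-squares witnesses. The disentanglement is achieved by selecting $\rho,\alpha,s$ from disjoint blocks of generators so that their Clifford classes and signature profiles do not interact, and by enumerating killed forms in an order that respects the hypotheses of Lemma~\ref{albert}(iii) at each stage. For (iii), the additional killing step for $I^k_tE_i$ integrates via case (d) of Lemma~\ref{albert}(iii) when $k\geq 3$; the cases $k\leq 2$ force $p\leq 2$ and degenerate into simpler subcases.
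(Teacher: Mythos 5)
Part (i) is essentially right and matches the paper's argument.

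For parts (ii) and (iii) the overall strategy (iterated function fields à la Merkurjev, with a division multiquaternion algebra encoding the $u$-invariant, a Cassels-type sum of squares encoding the Pythagoras number, and a form $\qf{1,a,b,-ab}$ as non-SAP witness, all monitored via Lemma~\ref{albert} and Corollary~\ref{SAP-pyth}) is the correct one and is the same as the paper's. However, there are three concrete gaps in the way you carry it out.

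First, the collection of forms you propose to kill in step (a), namely \emph{every} form of dimension $u+1$, would destroy the orderings of the field: taking the function field of a positive definite form of dimension $u+1$ does not preserve any real closure, and the resulting direct limit would be nonreal. The class one must kill to force $u(F)\le u$ is the class of \emph{torsion} forms of dimension $>u$ (the paper's ${\mathcal C}_3$); torsion forms are totally indefinite, so orderings extend to their function fields, and they are killed over $F_{pyth}$, so Corollary~\ref{SAP-pyth}(ii) applies without any modification.

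Second, your device of enlarging a form $\phi$ to $\phi\perp\tau$ with $|\sgn_P(\tau)|\le 1$ and then killing $\phi\perp\tau$ does not work, because ``killing $\phi\perp\tau$ kills $\phi$'' is false: isotropy over $F(\phi\perp\tau)$ is tautological for $\phi\perp\tau$ itself but a codimension-two subform $\phi$ need not become isotropic there (a positive definite $\phi$ certainly will not, since $F(\phi\perp\tau)$ is real whenever $\phi\perp\tau$ is t.i.). The paper avoids this problem by choosing killing classes that are \emph{already} of the right shape: the forms in ${\mathcal C}_1$ contain the torsion binary $\qf{1,-b}$, the forms in ${\mathcal C}_2$ contain the torsion binary $\beta$, and the forms in ${\mathcal C}_3$, ${\mathcal C}_4$ are themselves torsion. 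Thus Corollary~\ref{SAP-pyth}(ii) applies directly, with no enlargement needed.

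Third, Lemma~\ref{albert}(iii) is not enough to show that the multiquaternion division algebra survives the function fields of the forms controlling $p(F)$ and $S_1$. Those forms (e.g.\ $p\times\qf{1}\perp\qf{-b}$ with $b$ a sum of squares, or $2p\times\qf{1}\perp\beta$ with $\beta$ torsion binary) typically fail all of the hypotheses (a)--(d) of Lemma~\ref{albert}(iii): they can have dimension well below $2n+3$, have $d_\pm=\pm1$ accidentally, and need not lie in $I^3$. The paper's crucial observation, which your sketch omits, is that each such form contains $\qf{1,1}$ and is therefore \emph{isotropic over $K(\sqrt{-1})$}; consequently $K(\rho)(\sqrt{-1})=K(\sqrt{-1})(\rho)$ is purely transcendental over $K(\sqrt{-1})$, and division of $(A_n)_{K(\sqrt{-1})}$ (which is maintained throughout) is preserved automatically. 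Without this observation the construction has no mechanism to keep the Albert form anisotropic while simultaneously bounding $p(F)$ and enforcing $S_1$. Anisotropy of the Pythagoras witness itself is also monitored not by Lemma~\ref{albert} but by the low-dimension isotropy criterion of \cite{h-isotr} and by \cite[Cor.]{h-pyth}; your proposal attributes it to the wrong lemma.
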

\begin{proof}
(i) Clearly, $u(F)$ is either even or infinite.  It is also
obvious that $p(F)=1$ implies $u(F)=0$.    If $p(F)>2^{\ell-1}$ ($\ell\geq 1$) 
then there exists
an $x\in \sos{F}$ such that $2^{\ell-1}\times\qf{1}\perp\qf{-x}$ is
anisotropic.  This form is t.i.\ and a Pfister
neighbor of $\pff{-1,\ldots ,-1,x}\in P_\ell F$ which is therefore
torsion and anisotropic.  Hence $I^\ell_tF\neq 0$ and $u(F)\geq 2^\ell$.
This yields the claim.

\medskip

(ii) First, let us remark that if $u(F)\leq 2$, then $F$
automatically has property $S_1$. In fact, $S_1$ means that
to each torsion binary form $\beta$ over $F$ there exists an integer
$n\geq 1$ such that $(n\times\qf{1})\perp\beta$ is isotropic.
But if $u(F)\leq 2$, then  $\qf{1}\perp\beta$ is 
isotropic as it is a Pfister neighbor of some torsion $2$-fold
Pfister form which itself is hyperbolic as $I^2_tF=0$.

\smallskip

To realize the value $(p,u)=(1,0)$, 
let $F_0$ be the pythagorean closure of $E$.  Consider
the iterated power series field
$F=F_0(\!(x)\!)(\!(y)\!)$.  By Springer's theorem (cf.
\cite[Ch.~VI, \S 1]{l05}), $u(F)=2^2u(F_0)=0$
and $p(F)=p(F_0)=1$.
Note that we have $W_tF=I_tF=0$.
Furthermore, $F$ is not SAP as $\qf{1,x,y,-xy}$ is not
weakly isotropic.

\smallskip

To get the non-SAP field $F$ with $p(F)=u(F)=2$, let
$F_1=F_0(x,y)$ be the rational function field in two variables.
Note that again $F_1$ is not SAP as $\qf{1,x,y,-xy}$ is not
weakly isotropic.  Let $\phi =\qf{1,-(1+x^2)}$, which is anisotropic 
and torsion as $1+x^2\in \sos{F_1}\setminus F_1^2$.  
We now construct an infinite tower
$F_1\subset F_2\subset\ldots $ such that over each $F_i$, $\phi$ stays
anisotropic and $\qf{1,x,y,-xy}$ will not be weakly isotropic.  

The construction is as follows.
Having constructed $F_i$ with the desired properties,
$i\geq 1$, let $F_{i+1}$ be the compositum of all function fields
of $3$-dimensional t.i.\ forms over $F_i$. Since anisotropic $2$-dimensional
forms stay anisotropic over the function fields of forms of dimension
$\geq 3$ (see, e.g. \cite[Th.\,1]{h-isotr}), $\phi$ will stay
anisotropic over $F_{i+1}$.  By Cor.~\ref{SAP-pyth}, 
$\qf{1,x,y,-xy}$ will not be weakly isotropic over $F_{i+1}$.
Now let $F=\bigcup_{i=1}^{\infty}F_i$.  The above shows that
$\phi_F$ is anisotropic so that in particular $u(F)\geq 2$,
and $\qf{1,x,y,-xy}_F$ is not weakly isotropic so that $F$ is not SAP.  
Let $q\in P_2F\cap W_tF$. Any 
$3$-dimensional subform of $q$ is t.i.\ and thus isotropic by
construction of $F$.  Thus, $q$ is hyperbolic.  In particular,
$I^2_tF=0$ as $I^2_tF$ is generated as an ideal by torsion $2$-fold
Pfister forms (cf. \cite[Th.\,2.8]{el-real}).  By \cite[Prop.\,1.8]{el-u1},
this implies $u(F)\leq 2$ and thus $u(F)=p(F)=2$. Clearly,
$I^2_tF=0$.

\smallskip

To get those values $(p,u)$ of ${\mathcal N}$ with $u\geq 4$, we use a 
construction quite similar to that in the proofs of 
\cite[Th.\,2, Th.\,3]{h-pyth}.

So let $p\geq 2$,
$F_1=F_0(x_1,x_2,\ldots ,y_1,y_2,\ldots )$
be the rational function field in an infinite number of variables
$x_i,y_j$ over $F_0$.  Clearly, $F_1$ is not SAP as, for example,
the form $q=\qf{1,x_1,x_2,-x_1x_2}$ is not weakly isotropic.
Let $a=1+x_1^2+\ldots +x_{p-1}^2$ and let 
$\phi =\qf{\underbrace{1,\ldots ,1}_{p-1},-a}$ which is anisotropic
by a well known result of Cassels (cf. \cite[Ch.\,IX, Cor.\,2.4]{l05}).  
Let $n\geq 2$ and consider the multiquaternion algebra
$$A_n=(1+x_1^2,y_1)\otimes\ldots\otimes (1+x_{n-1}^2,y_{n-1})$$
over $F_1$. Then $A$ is a division algebra over $F_1$ and it will
stay a division algebra over $F_1(\sqrt{-1})$ (see, e.g.
\cite[Lem.\,2]{h-filtr}).  By Lemma~\ref{albert}, there exists
a $2n$-dimensional form $\psi_n$ such that in
$WF_1$ we have $\psi_n = \sum_{i=1}^{n-1}c_i\pff{1+x_{i-1}^2,y_{i-1}}$
for suitable $c_i\in F_1^*$.
Since $1+x_{i-1}^2\in \sos{F_1}$, the forms $\pff{1+x_{i-1}^2,y_{i-1}}$ are
torsion and thus $\psi_n\in I^2_tF_1$.  Furthermore, $\psi_n$ is anisotropic
as $A_n$ is division (this stays true over $F_1(\sqrt{-1})$).  

Let now $n\geq 2$ and $p\geq 2$ be such that $2n\geq 2^m\geq p$
for some integer $m$.
Suppose that $K$ is any real field extension of
$F_1$ such that $q_K$ is not weakly isotropic, $(A_n)_{K(\sqrt{-1})}$
is division and $\phi_K$ is anisotropic.  Consider the following three
types of quadratic forms over $K$:
$${\mathcal C}_1(K)=\{ \qf{\underbrace{1,\ldots ,1}_p,-b}\,|\,b\in \sos{K}\}\ ,$$
$${\mathcal C}_2(K)=\{ \qf{\underbrace{1,\ldots ,1}_{2p}}\perp\beta\,|\,\dim\beta 
=2,\ \beta\in W_tF\}\ ,$$
$${\mathcal C}_3(K)=\{ \alpha\,|\, \alpha\in W_tK,\  \dim\alpha \geq 2n+2\}\ .$$
Let $\rho\in {\mathcal C}_1(K)\cup {\mathcal C}_2(K)\cup {\mathcal C}_3(K)$. 
Then $(A_n)_{K(\rho)(\sqrt{-1})}$ is division so that in particular 
$(\psi_n)_{K(\rho)}$ is anisotropic.  For 
$\rho\in {\mathcal C}_i(K)$, $i=1,2$, this follows as $\rho_{K(\sqrt{-1})}$ is
isotropic (recall that in this case $\qf{1,1}\subset\rho$) and therefore
$K(\rho)(\sqrt{-1})=K(\sqrt{-1})(\rho)$ is purely transcendental over 
$K(\sqrt{-1})$. In the case $\rho\in {\mathcal C}_3(K)$ this is a consequence
of Lemma~\ref{albert}(iii).

Also, $\phi_{K(\rho )}$ is anisotropic.  This follows from \cite[Cor.]{h-pyth}
if $\rho\in {\mathcal C}_1(K)$, and from \cite[Th.\,1]{h-isotr} by comparing
dimensions if $\rho\in {\mathcal C}_i(K)$, $i=2,3$.

$q$ will not be weakly isotropic over $K(\rho )$ by Corollary~\ref{SAP-pyth}.

As before, we now construct a tower of fields $F_1\subset F_2\subset\ldots$
as follows.  Having constructed $F_{i}$, we let $F_{i+1}$ be the compositum
of all function fields of forms in ${\mathcal C}_1(F_i)\cup 
{\mathcal C}_2(F_i)$.  Let $F=\bigcup_{i=1}^{\infty}F_i$.
By the above,  $(\psi_n)_F$ is anisotropic (and torsion), so that
$u(F)\geq 2n$.  On the other hand, torsion forms of dimension $>2n$ will
be isotropic by construction.  Thus $u(F)=2n$.

$\phi_F$ is also anisotropic.  Hence $p(F)\geq p$.  By construction,
all forms in ${\mathcal C}_1(F)$ are isotropic and thus $p(F)=p$.

$q_F$ is not weakly isotropic and therefore $F$ is not SAP.
In particular $\hn (F)=\infty$.

Finally, $F$ has property $S_1$ as all forms in ${\mathcal C}_2(F)$ are 
isotropic by construction.

\smallskip

To obtain the values $(p,\infty )$ with $p\geq 2$, we do the same
construction as before, but this time only with forms in ${\mathcal C}_i(F)$,
$i=1,2$.  This will again yield a non-SAP field $F$ with property
$S_1$ and with $p(F)=p$.  However,
this time we have that $(A_n)_F$ will be a division algebra for each
$n\geq 2$, so that $(\psi_n)_F$ will be an anisotropic torsion  
form of dimension $2n$ for each $n\geq 2$.  In particular, $u(F)=\infty$.

\smallskip

Finally, to obtain $(\infty,\infty)$, construct first a non-SAP field $F^{(1)}$
which is $S_1$ and with $(p(F),u(F))=(2,\infty)$ and anisotropic $2n$-dimensional
torsion forms $\psi_n$, $n\geq 2$ and the t.i. form $q$ that is not weakly isotropic,
as done above.  Then repeat this construction
for $p=4$ starting with $F^{(1)}$ as base field to get a non-SAP field $F^{(2)}$
which is $S_1$ and with $(p(F),u(F))=(4,\infty)$.  Note that in this step, the
forms $\psi_n$ will stay anisotropic over $F^{(2)}$ and $q$ will not become
weakly isotropic.  Thus, we get a tower
$F^{(1)}\subset F^{(2)}\subset F^{(3)}\subset F^{(4)}\subset\ldots$ with
$(p(F^{(i)}),u(F^{(i)}))=(2^i,\infty)$. 
Let $F^{(\infty)}=\bigcup_{i=1}^\infty F^{(i)}$.  

The above shows that
$\psi_n$ will stay anisotropic over $F^{(\infty)}$ for all $n\geq 2$, so 
$u(F^{(\infty)})=\infty$.  Clearly, by construction, $F^{(\infty)}$ will be
$S_1$, and also non-SAP since $q$ will not become weakly isotropic.
Finally, over $F^{(i)}$, since $p(F^{(i)})=2^i$, there exists a sum of 
squares $x_i\in F^{(i)}$ such that
$\mu_i:=(2^i-1)\times\qf{1}\perp\qf{-x_i}$ is anisotropic.  Since
$F^{(i+1)}$ (and subsequently $F^{(m)}$, $m>i$) is obtained by iteratively
taking function fields of forms of dimension $>2^{i+1}$, the anisotropic 
$2^i$-dimensional form $\mu_i$
will stay anisotropic over each $F^{(m)}$, $m>i$ 
(see, e.g., \cite[Th.\,1]{h-isotr}), and thus also over
$F^{(\infty)}$, showing that $p(F^{(\infty)})\geq 2^i$ for each $i$, hence
$p(F^{(\infty)})=\infty$.

\medskip

(iii) If $k\leq 2$ then $I^2_tF=0$ and thus $u(F)\leq 2$.  These cases
have already been dealt with in the proof of (ii).  So suppose that
$k\geq 3$.  We repeat the steps in (ii), but when taking composites
of function fields, we now include also function fields of forms in
$${\mathcal C}_4(K)=\{ \alpha\,|\, \alpha\in I^k_tK,\ \dim\alpha\geq 2^k\}$$
in addition to those in ${\mathcal C}_i$, $1\leq i\leq3$ (resp.
${\mathcal C}_1$, ${\mathcal C}_2$ in the case $u=\infty$).
Since by the Arason-Pfister Hauptsatz, we have that anisotropic forms
in $I^kF$ must be of dimension $\geq 2^k$, we immediately see that
by construction $I^k_tF=0$.

$(A_n)_F$ will still be a division algebra by Lemma~\ref{albert}(iii)
as we only consider in addition function fields of forms in $I^k_t$
with $k\geq 3$.  Thus, $\psi_n$ will be anisotropic as above and
we get again that $u(F)=u$.  Since $\dim\phi =p\leq 2^{k-1}$, it
follows from \cite[Th.\,1]{h-isotr} that $\phi_F$ will still be anisotropic
as we only consider in addition function fields of forms which have
dimension $\geq 2^k$.  We conclude similarly as above that $p(F)=p$.

Using the same reasoning as above, Cororollary~\ref{SAP-pyth} implies that
$q_F$ is not weakly isotropic and therefore $F$ is not SAP, so that
in particular $\hn (F)=\infty$.  Obviously, $F$ will again have
the property $S_1$.
\end{proof}

\begin{rem} In \cite[\S\ 5]{ep}, examples of real fields
$F$ with $u(F)=2^n$ have been constructed for each integer $n\geq 1$
with the property that $u(F(\sqrt{a}))=\infty$ and $p(F(\sqrt{a}))=2$.
$u(F(\sqrt{a}))=\infty$ implies that $F$ is non-SAP by
\cite[Cor.~2.4]{ep}.  It is also indicated how to obtain such a 
field which {\em does not\/} satisfy $S_1$ (resp. certain
properties $S_n$ which generalize $S_1$), see \cite[Rem.~5.3]{ep}.
\end{rem}

We will now construct real SAP fields $F$
such that $\hn (F)=\infty$ and $u(F)=2n$ for a given $n$. 
First, we note that it will be impossible to realize such examples
for all values in ${\mathcal N}$ (cf. Theorem~\ref{u-nonSAP}).

\begin{prop}\label{u=2-SAP} Let $F$ be real and SAP.
If $u(F)\leq 2$ then $u(F)=\hn (F)$.
\end{prop}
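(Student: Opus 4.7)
\emph{Plan.} The plan is to show $\hn(F)\le 2$, which together with the trivial $u(F)\le\hn(F)$ yields $u(F)=\hn(F)$. Throughout I use that $u(F)\le 2$ is equivalent to $I^2_tF=0$ (\cite[Prop.\,1.8]{el-u1}). Suppose for contradiction $q$ is an anisotropic totally indefinite form over $F$ of dimension $n\ge 3$; I dispatch the base case $n=3$ directly and reduce $n\ge 4$ to it inductively using SAP.

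\emph{Base case $n=3$.} Let $d=d_\pm q$ and set $q':=q\perp\qf{-d}$. A discriminant computation shows $\det q'\equiv 1\pmod{F^{*2}}$, so $q'\in I^2F$. Since $q$ is totally indefinite of dimension $3$, $\sgn_Pq\in\{-1,1\}$ at every ordering $P$, and a case inspection of the two possible sign patterns of the entries of $q$ at $P$ (two positive and one negative, or vice versa) gives $\sgn_Pd=\sgn_Pq$, whence $\sgn_Pq'=0$ at every $P$. Thus $q'\in W_tF\cap I^2F=I^2_tF=0$ is hyperbolic, which makes $q$ Witt equivalent to $\qf{d}$; but the anisotropic form $q$ of dimension $3$ cannot be Witt equivalent to a one-dimensional form, a contradiction.

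\emph{Inductive step $n\ge 4$ and principal obstacle.} The aim is to find an anisotropic totally indefinite subform $q_0$ of dimension $n-1$, to which the induction hypothesis applies. Writing $q\cong\qf{c}\perp q_0$ for some $c\in D(q)$, the totally indefinite condition on $q_0$ translates to the constraint that $\sgn_Pc$ have the same sign as $\sgn_Pq$ on the clopen ``almost definite'' locus $A:=\{P:|\sgn_Pq|=n-2\}$, with no constraint elsewhere. SAP produces an element of $F^*$ realizing any such clopen sign pattern. The main technical obstacle is to arrange that this element actually lies in $D(q)$ rather than merely in $F^*$: since $q$ is totally indefinite of dimension $\ge 3$, it is isotropic and hence universal over every real closure of $F$, so the local obstructions to representation vanish, and SAP can be leveraged to patch the local representations into a global $c\in D(q)$ with the desired sign pattern. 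Once such a $c$ is found, $q_0$ is automatically anisotropic (as a subform of $q$) and totally indefinite of dimension $n-1\ge 3$, and the induction closes.
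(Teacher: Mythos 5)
The paper's proof is short and goes through cited results: $u(F)\le 2$ implies $S_1$ (as recorded earlier in the paper), SAP together with $S_1$ gives $\hn(F)<\infty$ (the Elman--Lam--Prestel theorem), and then $p(F)\le u(F)\le 2$ plus \cite[Cor.\,3.7, Rem.\,3.8]{h-indef2} pins down $\hn(F)=u(F)$. Your approach is genuinely different: a direct descent showing $\hn(F)\le 2$. Your base case $n=3$ is correct and pleasantly self-contained (the discriminant computation, the sign analysis giving $\sgn_Pd_\pm q=\sgn_Pq$, and $I^2_tF=0$ together force $q\perp\qf{-d_\pm q}$ hyperbolic, which is impossible for anisotropic $q$ of dimension $3$). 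This part does not appear in the paper and is a nice elementary argument.

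However, the inductive step has a genuine gap, and it is exactly where you flag the ``principal obstacle.'' You need a $c\in D(q)$ whose sign at every ordering $P$ with $|\sgn_P q|=n-2$ agrees with $\sgn(\sgn_Pq)$. SAP produces an element of $F^*$ with any prescribed clopen sign pattern, but it says nothing about which sign patterns occur inside $D(q)$. The assertion that, because $q$ is isotropic over every real closure, ``the local obstructions to representation vanish, and SAP can be leveraged to patch the local representations into a global $c\in D(q)$'' is precisely a weak Hasse (local--global) principle for representation of elements by $q$, and such a principle is \emph{not} a formal consequence of SAP. In fact, establishing it (or the equivalent effective-diagonalization/ED property) from SAP and $S_1$ is essentially the content of the Prestel and Elman--Lam--Prestel theorems that the paper's proof invokes; your argument would, if completed at this step, be reproving that machinery rather than avoiding it. Note also that the most natural attempt to force $c\in D(q)$ --- choose $a$ by SAP and argue $q\perp\qf{-a}$ is isotropic because it is t.i.\ of dimension $n+1$ --- is circular, since it presupposes a bound on $\hn(F)$. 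As written, the inductive step is an identification of the difficulty, not a resolution of it, so the proof is incomplete from $n=4$ onward.
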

\begin{proof}  As remarked in the proof of Theorem~\ref{u-nonSAP},
$u(F)\leq 2$ implies that $F$ has property $S_1$.  Since $F$
is SAP by assumption, we thus have $\hn (F)<\infty$.  
Now $p(F)\leq u(F)\leq 2$, and by \cite[Cor.~3.7, Rem.~3.8]{h-indef2} we
have $u(F)=\hn (F)$.
\end{proof}

\begin{thm}\label{u-nonS1} Let ${\mathcal N}$ be as in
Theorem~\ref{u-nonSAP}.
\begin{itemize}
\item[(i)] If $F$ is a real SAP field with $\hn (F)=\infty$,
then $u(F)\geq 4$ and $(p(F),u(F))\in {\mathcal N}$.  Furthermore,
$I^2_tF\neq 0$.
If in addition $I^k_tF=0$, $k\geq 3$, then $p(F)\leq 2^{k-1}$.
\item[(ii)] Let $E$ be a real field and let $(p,u)\in {\mathcal N}$
with $u\geq 4$.  
Then there exists a real field extension $F/E$ such that
$F$ is SAP, $F$ does not have property $S_1$ and $(p(F),u(F))=(p,u)$.
In particular, $\hn (F)=\infty$.
\item[(iii)] Let $E$ be a real field and let $(p,u)\in {\mathcal N}$
with $u\geq 4$ and such that $p\leq 2^{k-1}$, $k\geq 3$.  
Then there exists a real
field extension $F/E$ such that $F$ is SAP, $F$ does not have property $S_1$,
$I^k_tF=0$ and $(p(F),u(F))=(p,u)$.  In particular, $\hn (F)=\infty$. 
\end{itemize}
\end{thm}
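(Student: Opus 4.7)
The plan is to derive (i) from Proposition~\ref{u=2-SAP} together with Theorem~\ref{u-nonSAP}(i), and to prove (ii) and (iii) by adapting the Merkurjev-style iterated function-field construction of Theorem~\ref{u-nonSAP}(ii)--(iii), replacing the class $\mathcal{C}_2$ (used there to force $S_1$) by a class whose function fields force SAP, while tracking a carefully chosen torsion binary form $\beta$ through the tower as a witness to the failure of $S_1$.

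For (i): over a real field, $u(F)$ is either $0$ (forcing $F$ nonreal, excluded) or a positive even integer or $\infty$. Since $F$ is SAP and $\hn(F) = \infty$, Proposition~\ref{u=2-SAP} rules out $u(F) = 2$, so $u(F) \geq 4$. The membership $(p(F), u(F)) \in \mathcal{N}$ and the bound $p(F) \leq 2^{k-1}$ under the hypothesis $I^k_t F = 0$ are restatements of Theorem~\ref{u-nonSAP}(i). Finally, if $I^2_t F = 0$ then \cite[Prop.~1.8]{el-u1} forces $u(F) \leq 2$, contradicting $u(F) \geq 4$; hence $I^2_t F \neq 0$.

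For (ii), take $F_0 = E_{pyth}$, $F_1 = F_0(x_i, y_j : i, j \geq 1)(u, v)$ (with $u, v$ two extra transcendentals reserved for the non-$S_1$ witness), together with the forms $\phi$, $A_n$, $\psi_n$ as in Theorem~\ref{u-nonSAP}(ii), and designate $\beta = \qf{u, -u(1+v^2)}$ as the $S_1$-witness. Replace $\mathcal{C}_2$ by
\[
\mathcal{C}_{\mathrm{SAP}}(K) = \{\, 2 \times \qf{1, a, b, -ab} : a, b \in K^*\,\}
\]
(each such form has dimension $8$, contains $\qf{1,1}$ as a subform, and its isotropy over the function field immediately implies weak isotropy of $\qf{1, a, b, -ab}$). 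Construct the tower $F_1 \subset F_2 \subset \ldots$ by letting $F_{i+1}$ be the compositum of function fields of forms in $\mathcal{C}_1(F_i) \cup \mathcal{C}_3(F_i) \cup \mathcal{C}_{\mathrm{SAP}}(F_i)$, and set $F = \bigcup_i F_i$. The verifications that $(\psi_n)_F$ and $\phi_F$ remain anisotropic go through exactly as in Theorem~\ref{u-nonSAP}(ii), since every $\rho \in \mathcal{C}_1 \cup \mathcal{C}_{\mathrm{SAP}}$ contains $\qf{1,1}$, so $\rho_{K(\sqrt{-1})}$ is isotropic and $K(\rho)(\sqrt{-1})/K(\sqrt{-1})$ is purely transcendental, while Lemma~\ref{albert}(iii) handles $\mathcal{C}_3$. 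One concludes $p(F) = p$, $u(F) = u$; $F$ is SAP by construction; and (given the anisotropy verifications below) $F$ fails $S_1$ because $\beta$ is torsion binary with each $n \times \qf{1}\perp\beta$ anisotropic over $F$, so in particular $\hn(F) = \infty$. For (iii), additionally include the class $\mathcal{C}_4(K) = \{\alpha \in I^k_t K : \dim\alpha \geq 2^k\}$ from Theorem~\ref{u-nonSAP}(iii); the Arason--Pfister Hauptsatz yields $I^k_t F = 0$, and Lemma~\ref{albert}(iii) together with \cite[Th.\,1]{h-isotr} preserves anisotropy of $\psi_n$, $\phi$, and each $n \times \qf{1}\perp\beta$ as before (the hypothesis $p \leq 2^{k-1}$ being used exactly as in Theorem~\ref{u-nonSAP}(iii)).

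The main obstacle is the claim that $n \times \qf{1}\perp\beta$ remains anisotropic through the tower for every $n \geq 0$. Anisotropy over $F_1$ follows from Springer's theorem applied to the $u$-adic valuation on $F_0(\ldots)(v)((u))$, which reduces the question to the anisotropy of the residue forms $n \times \qf{1}$ and $\qf{1, -(1+v^2)}$ over $F_0(\ldots)(v)$; both are anisotropic because $F_0(\ldots)(v)$ is formally real and $1+v^2$ is not a square. For preservation under $\rho \in \mathcal{C}_3 \cup \mathcal{C}_4$ (of dimension at least $2n+2$ or $2^k$), Hoffmann's dimension bound \cite[Th.\,1]{h-isotr} applies whenever $n$ lies in the appropriate range. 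The subtle case is $\rho \in \mathcal{C}_1 \cup \mathcal{C}_{\mathrm{SAP}}$, where $\dim \rho$ is bounded ($p+1$ or $8$, respectively) while $\dim(n \times \qf{1}\perp\beta) = n+2$ grows; here one must exploit the specific structure of $\beta$, either by arranging that the $u$-adic valuation on $F_i$ extends to $F_{i+1}$ so the residue analysis continues to apply, or by a signature-comparison argument: since $\rho$ is totally indefinite, signatures are preserved over $F_i(\rho)$, so an isotropy of $n \times \qf{1}\perp\beta$ over $F_i(\rho)$ would produce an anisotropic $n$-dimensional form of constant signature $n$, i.e., a totally positive-definite form, which can be ruled out by a Cassels--Pfister subform argument against $\rho$. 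This bookkeeping is the crux of the proof.
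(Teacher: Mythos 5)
Your treatment of part (i) is essentially the paper's argument, modulo a small slip: you say ``$u(F)=0$ forces $F$ nonreal,'' but real closed fields have $u=0$; the correct route, which you also give, is simply that Proposition~\ref{u=2-SAP} excludes $u(F)\leq 2$ outright under the hypothesis $\hn(F)=\infty$. Your class $\mathcal{C}_{\mathrm{SAP}}(K)=\{2\times\qf{1,a,b,-ab}\}$ is precisely the paper's redefined $\mathcal{C}_2(K)=\{\qf{1,1}\otimes\qf{1,x,y,-xy}\}$, and the SAP verification and the handling of $\phi$, $A_n$, $\psi_n$ are the same as in the paper. So far so good.

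The genuine gap is exactly where you flag it: you never actually prove that $n\times\qf{1}\perp\beta$ stays anisotropic through the tower, and neither of the two routes you sketch fills the hole. The signature-comparison route fails: if $\rho$ is t.i.\ and $n\times\qf{1}\perp\beta$ becomes isotropic over $F_i(\rho)$, all you learn from signatures is that the anisotropic part is an $n$-dimensional totally positive-definite form; there are many such forms, and nothing forces this to contradict the Cassels--Pfister subform criterion for $\rho$. Your other route, ``arrange that the $u$-adic valuation extends so the residue analysis continues to apply,'' is not something one can arrange by fiat; valuations do not automatically extend to function fields of quadrics in a way that keeps control of the residue forms, and making this precise is the entire difficulty. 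The paper resolves it by building a parallel tower $L_i=L_i'(\!(t)\!)$ with $F_i\subset L_i$: at stage $i$ one fixes an ordering $P_i$ on $L_i'$, passes to the compositum $M_i'$ of function fields of all forms over $L_i'$ of dimension $\geq 3$ indefinite at $P_i$, and shows via Springer's decomposition that every $\rho\in\mathcal{C}_1(F_i)\cup\mathcal{C}_2(F_i)\cup\mathcal{C}_3(F_i)$ has a residue in $\mathcal{C}'(L_i')$, hence becomes isotropic over $M_i=M_i'(\!(t)\!)$; consequently the compositum $N_i$ of the $M_i(\rho)$ is purely transcendental over $M_i$, and one can reorder the power series variable to the top to get $L_{i+1}=L_{i+1}'(\!(t)\!)\supset F_{i+1}$ with $\mu_m$ still anisotropic by Springer. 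This explicit mechanism for carrying a power-series structure (and hence a usable residue analysis) across all the function-field steps, including the bounded-dimensional ones in $\mathcal{C}_1$ and $\mathcal{C}_2$, is the missing idea in your proposal. Without it, parts (ii) and (iii) are not proved.
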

\begin{proof}
(i)  If $I^2_tF =0$, then $u(F)\leq 2$ by \cite[Prop.\,1.8]{el-u1}.
The result now follows from Theorem~\ref{u-nonSAP} and 
Proposition~\ref{u=2-SAP}.

\medskip

(ii) We proceed as in the proof of Theorem~\ref{u-nonSAP}(ii)
for the case $(p,u)\in {\mathcal N}$ and $2n=u\geq 4$, except for the
definition of $F_1$, which now will be the power series field in
one variable $t$ over the field which was denoted by $F_1$ in
the proof of Theorem~\ref{u-nonSAP}(ii): 
$F_1=F_0(x_1,x_2,\ldots ,y_1,y_2,\ldots )(\!(t)\!)$.  We keep the notations
for $A_n$, $\psi_n$, ${\mathcal C}_1(K)$, ${\mathcal C}_3(K)$.
We redefine ${\mathcal C}_2(K)$:
$${\mathcal C}_2(K)=\{\qf{1,1}\otimes\qf{1,x,y,-xy}\,|\,x,y\in K^*\}\ .$$
We construct a tower of fields $F_1\subset F_2\subset\ldots$
as follows.  Having constructed $F_{i}$, we let $F_{i+1}$ be the compositum
of all function fields of forms in ${\mathcal C}_1(F_i)\cup 
{\mathcal C}_2(F_i)\cup{\mathcal C}_3(F_i)$.  
Let $F=\bigcup_{i=1}^{\infty}F_i$.

Exactly as in the proof of Theorem~\ref{u-nonSAP}(ii), it follows that
$(u(F),p(F))=(p,u)$.  It remains to show that $F$ is SAP and does
not have property $S_1$.

Now by construction, for all $x,y\in F^*$ we have that
$\qf{1,1}\otimes\qf{1,x,y,-xy}$ is isotropic.  In particular, each form
$\qf{1,x,y,-xy}$ is weakly isotropic, which shows by Lemma~\ref{SAP-wi}
that $F$ is SAP.

Now let $d=1+x_1^2$ and consider the form $\mu_m=m\times\qf{1}\perp t\qf{1,-d}$
which is anisotropic over $F_1$ by Springer's theorem.
Let $L_1=F_1$ and $L_1'=F_1'=F_0(x_1,x_2,\ldots ,y_1,y_2,\ldots )$.
We now construct a tower of fields $L_1\subset L_2\subset\ldots $
such that $L_i$ will be the power series field in the
variable $t$ over some $L_i'$, $L_i=L_i'(\!(t)\!)$, such that
$F_i\subset L_i$, and $(\mu_m)_{L_i}$ anisotropic for all $m\geq 0$
and all $i\geq 1$.  This then shows that $(\mu_m)_{F_i}$ is anisotropic
for all $m\geq 0$, $i\geq 1$, and therefore $(\mu_m)_F$ will be
anisotropic for all $m\geq 0$.  It follows that the torsion form
$(-t\qf{1,-d})_F$ does not represent any element in $\sos{F}$.
Thus, $F$ does not have property $S_1$.

Suppose we have constructed $L_i=L_i'(\!(t)\!)$. Note that
necessarily $L_i$ is real as $(\mu_m)_{L_i}$ is anisotropic
for all $m\geq 0$.  Let $P_i\in X_{L_i'}$ be any ordering and  
$M_i'$ be the compositum over $L_i'$ of the function fields of all forms
(defined over $L_i'$) in
$${\mathcal C}'(L_i')=\{\alpha\,|\,\mbox{$\alpha$ indefinite at $P_i$,
$\dim\alpha\geq 3$}\}\ .$$
Let $M_i=M_i'(\!(t)\!)$.

Now let $\rho\in {\mathcal C}_1(F_i)\cup {\mathcal C}_2(F_i)
\cup{\mathcal C}_3(F_i)$ and consider $L_i(\alpha )$.
By Springer's theorem, $\rho_{L_i}\cong\beta\perp t\gamma$ where
$\beta$, $\gamma$ are defined over $L_i'$.  Suppose 
$\rho\in {\mathcal C}_1(F_i)$.  Then $\rho\cong p\times\qf{1}\perp \qf{-b}$
with $b\in \sos{L_i}$.  But then, up to a square, $b\in \sos{L_i'}$ and
thus $\rho_{L_i}\in {\mathcal C}'(L_i')$.  Hence, $\rho_{M_i}$ is isotropic
and therefore $M_i(\rho)/M_i$ is purely transcendental.

Suppose $\rho =\qf{1,1}\otimes\qf{1,x,y,-xy}\in {\mathcal C}_2(F_i)$.  
Then either $\rho_{L_i}$ is already defined over $L_i'$, in which case
it is a t.i.\ form of dimension $8$ and thus in
${\mathcal C}'(L_i')$.  Or there exist $a,b\in L_i^{\prime*}$ such that
$\rho \cong\qf{1,1}\otimes\qf{1,a}\perp bt\qf{1,1}\otimes\qf{1,-a}$.
then either $\qf{1,1}\otimes\qf{1,a}$ is indefinite at $P_i$ and thus
in ${\mathcal C}'(L_i')$, or $\qf{1,1}\otimes\qf{1,-a}$ is indefinite
at $P_i$ and thus in ${\mathcal C}'(L_i')$.  In any case, we see that
$\rho_{M_i}$ is isotropic, and again $M_i(\rho)/M_i$ is purely transcendental.

Finally, suppose that $\rho \in {\mathcal C}_3(F_i)$.  Then $\rho_{L_i}\in
W_tL_i$, and if we write $\rho\cong \beta\perp t\gamma$ with $\beta$ and
$\gamma$ defined over $L_i'$, then $\beta\in W_tL_i'$ and 
$\gamma\in W_tL_i'$.  Now $\dim\rho\geq 6$, and hence $\dim\beta\geq 4$
or $\dim\gamma\geq 4$.  Hence $\beta\in {\mathcal C}'(L_i')$ or
$\gamma\in {\mathcal C}'(L_i')$.  As above, we conclude that 
$\rho_{M_i}$ is isotropic and that $M_i(\rho)/M_i$ is purely transcendental.

Now let $N_i$ be the compositum of the function fields of all
forms $\alpha_{M_i}$ with $\alpha\in 
{\mathcal C}_1(F_i)\cup {\mathcal C}_2(F_i)\cup{\mathcal C}_3(F_i)$.
By the above, $N_i/M_i$ is purely transcendental.  Let $B$ be a
transcendence basis so that $N_i=M_i(B)=M_i'(\!(t)\!)(B)$.
We now put $L_{i+1}'=M_i'(B)$ and $L_{i+1}=L_{i+1}'(\!(t)\!)=
M_i'(B)(\!(t)\!)$.  There are obvious inclusions 
$F_{i+1}\subset N_i=M_i'(\!(t)\!)(B)\subset M_i'(B)(\!(t)\!)=L_{i+1}$.
Since $M_i'$ is obtained from $L_i'$ by taking function fields
of forms indefinite at $P_i$, we see that $P_i$ extends to an ordering
on $M_i'$ and thus clearly also to orderings on $L_{i+1}$.  

It remains to show that $\mu_m$ stays anisotropic over $L_{i+1}$.
Now $m\times\qf{1}$ is clearly anisotropic over the real
field  $L_{i+1}'$.   Also, $\qf{1,-d}$, which is anisotropic
over $L_i'$ by assumption, stays anisotropic over $L_{i+1}'$
as $L_{i+1}'$ is obtained by taking function fields of forms of
of dimension $\geq 3$ over $L_i'$ followed by a purely transcendental
extension.  By Springer's theorem, $(\mu_m)_{L_{i+1}}=
(m\times\qf{1}\perp t\qf{1,-d})_{L_{i+1}}$ is anisotropic.

\smallskip

To get the values of type $(p,\infty )$, $(\infty,\infty)$, 
we adjust the above arguments
as in the proof of Theorem~\ref{u-nonSAP}(ii).

\medskip

(iii)  This follows easily by combining the proof of part (ii) above
with that of Theorem~\ref{u-nonSAP}(iii).
We leave the details to the reader.
\end{proof}

\begin{rem} Let $K$ be any real field over $E$ with
$u(K)=2n$ and such that $K$ is uniquely ordered.
For $n\geq 2$, such fields have been constructed in 
\cite[Th.\,2]{h-pyth}. 
The construction there can also readily
be used to get such a $K$ for $n=1$.

Now consider $F=K(\!(t)\!)$, the power series field in one
variable $t$ over $K$.  By Springer's theorem, $u(F)=4n=2u(K)$.
Since $K$ is uniquely ordered, we have that $F$ is SAP
(cf. \cite[Prop.~1]{elp}).  Since $u(K)>0$, $K$ is not 
pythagorean.  So let $d\in \sos{K}\setminus K^{*2}$.  
Then the form $(m\times\qf{1})\perp t\qf{1,-d}$ is anisotropic
for all $m$ (again by Springer's theorem), and since
$t\qf{1,-d}$ is torsion, we see that $F$ does not have
property $S_1$.  Hence $\hn (F)=\infty$.

This rather simple construction yields SAP fields with
$u(F)=4n$ and $\hn (F)=\infty$ for all $n\geq 1$, but it does not
provide examples where $u(F)=4n+2$, $n\geq 1$.  Furthermore,
one checks easily that it will not yield examples of SAP fields
with $\hn (F)=\infty$,  
$u(F)>4$ and $I^3_tF=0$, which do exist by the above theorem.
\end{rem}

\section{Fields with $PN(n)$ and $\hn(F)=2^{n+1}$}
In \cite{b1}, Becher studies fields $F$ that possess an anisotropic form
$\phi$ such that any other anisotropic form over $F$ is a subform of $\phi$.
It can be shown that such a form $\phi$ is then necessarily an $n$-fold Pfister
form for some $n\in \NN_0$ (called {\em supreme Pfister form}), in which case
$F$ is nonreal and $u(F)=\dim\phi=2^n$.  It is clear that any such field will have
property $PN(n-1)$.  A well known example of such a field is 
the iterated power series field $F=\CC(\!(X_1)\!)(\!(X_2)\!)\ldots (\!(X_n)\!)$,
where the supreme Pfister form is given by $\pff{X_1,\ldots,X_n}$.  

This also shows that for any $n\geq 2$, there exist nonreal fields $F$ with property 
$PN(n)$ and $u(F)=2^{n+1}$.

To get real fields with $PN(n)$ ($n\geq 2$) and $u(F)=\hn(F)=2^{n+1}$, consider
the real field $K=\QQ (X_1,\cdots ,X_n)$.  Let $\pi =\pff{2,X_1,\ldots,X_n}$.  
One readily sees that $\pi$ is anisotropic and torsion (since $\pff{2}\cong\qf{1,-2}$
is torsion).  Fix an ordering $P\in X_K$. 
Now consider
$${\mathcal C}=\{ \mbox{field extensions $L$ of $K$ s.t. $P$ extends to $L$ and $\pi_L$
anisotropic}\}$$
Clearly, $K\in {\mathcal C}$, ${\mathcal C}$ is closed under direct limits, and if
$L\in {\mathcal C}$ and $L'$ is a field with $K\subset L'\subset L$, 
then $L'\in {\mathcal C}$.
Then, by \cite[Theorem 6.1]{b1}, there exists a field $F\in {\mathcal C}$ such that for any
anisotropic form $\phi$ over $F$, $\dim\phi\geq 2$, one has that 
$F(\phi)\notin {\mathcal C}$.  We claim that $F$ has a unique ordering (which extends $P$),
that $F$ has $PN(n)$ and that $u(F)=\hn (F)=2^{n+1}$.

Now by construction, $F$ is real with an ordering $P'$ extending $P$.
Suppose there exists $Q\in X_F$ with $Q\neq P'$.  Let $a\in F$ such that $a>_{P'}0$ and
$a<_Q0$, and consider $q\cong (2^{n+1}\times\qf{1})\perp\qf{-a}$.  Then $q$ is 
anisotropic as it is positive definite at $Q$, and $P'$ (and thus $P$) extends to 
$F(q)$ as $q$ is indefinite at $P'$.  However, since $\dim q=2^{n+1}+1>2^{n+1}=\dim\pi$,
$\pi$ stays anisotropic over $F(q)$.  Hence $F(q)\in {\mathcal C}$, a contradiction.
Thus, $X_F=\{ P'\}$.

In particular, since $\pi_F$ is torsion and anisotropic, we have $u(F)\geq 2^{n+1}$.
Suppose $\hn (F) >2^{n+1}$.  Then there exists an anisotropic t.i. form $\tau$ 
with $\dim\tau > 2^{n+1}$.  A similar reasoning as above shows that 
$F(\tau)\in {\mathcal C}$, again a contradiction.  Hence $\hn(F)\leq 2^{n+1}$
and we have $u(F)=\hn (F)=2^{n+1}$.  

Now let $\psi$ be any form of dimension $2^n+1$ over $F$.  If $\psi$ is isotropic,
it is easily seen to be a Pfister neighbor of the hyperbolic $(n+1)$-fold 
Pfister form.  So assume that $\psi$ is anisotropic.
Suppose first that $\psi$ is t.i. and consider $\rho =(\pi_F\perp-\psi)_{\ani}$.
Then $2^n-1\leq\dim\rho$.  If $\dim\rho>2^n-1$ then $\dim\rho\geq 2^n+1=\dim\psi$
and $|\sgn_{P'}\rho |=|\sgn_{P'}\psi|\leq 2^n-1$, so in particular $\rho$ is t.i. and thus
$P'$ extends to $F(\rho)$.  Since we cannot have $F(\rho)\in {\mathcal C}$, we must 
therefore have that $\pi_{F(\rho)}$ is isotropic and hence hyperbolic, so
$\rho$ is similar to a subform of $\pi_F$.  Thus, there exists $x\in F^*$ and a form
$\gamma$, $\dim\gamma \leq 2^n-1$ with $x\pi_F\cong\rho\perp\gamma$.  Thus, in $WF$, we get
$x\pi_F =\pi_F\perp-\psi\perp\gamma$.  
But $\pi_F\perp -x\pi_F\in P_{n+2}F$ is torsion, therefore
isotropic since $u(F)=2^{n+1}$ and thus hyperbolic (this actually shows
that $x\pi_F\cong\pi_F$ for any $x\in F^*$).  Hence, we have $\psi =\gamma$
in $WF$ with $\psi$ anisotropic and $\dim\psi >\dim\gamma$, a contradiction.
It then follows that $\dim\rho=2^n-1$ and therefore $\pi_F\cong\rho\perp\psi$, showing
that $\psi$ is a Pfister neighbor of $\pi_F$.

Now suppose that $\psi$ is definite at the unique ordering $P'$ of $F$.  
After scaling, we may assume that $\psi$ is positive definite.  Let 
$\sigma = 2^{n+1}\times\qf{1}\in P_{n+1}F$.  If $\psi$ is a subform of $\sigma$
then it is a Pfister neighbor and we are done.  So suppose that $\psi$ is not a 
subform of $\sigma$ and let $\eta \cong (\sigma\perp -\psi)_{\ani}$.  We then
have that $\dim\eta \geq 2^n+1$ whereas $\sgn_{P'}\eta =2^n-1$.  In particular,
$\eta$ is t.i., and $P'$ extends to $F(\eta)$.  But $F(\eta)\notin {\mathcal C}$,
so we must have that $\pi_{F(\eta)}$ is isotropic and hence hyperbolic, and
as above we have that $\pi_F\cong\eta\perp\delta$ for some form $\delta$ with
$\dim\delta \leq 2^n-1$.  In $WF$, we thus get 
$\sigma\perp -\pi_F = \psi\perp -\delta\in I^{n+1}F$.  Now since 
$\dim\psi =2^n+1\geq \dim\delta +2$,
we have that $\psi\perp -\delta$ is of dimension $\leq 2^{n+1}$ but not hyperbolic.
By the Arason-Pfister Hauptsatz, 
we necessarily have that $\dim\delta =2^n-1$ and $\psi\perp -\delta\in GP_{n+1}F$,
so $\psi$ is a Pfister neighbor, showing that $F$ has property $PN(n)$.

Let us finally remark that in this example, the proof shows that 
$\pi_F$ is the unique anisotropic 
torsion $(n+1)$-fold Pfister form over $F$, and that there are two anisotropic
(positive definite) $(n+1)$-fold Pfister forms, namely $\sigma$ and
$(\sigma\perp -\pi)_{\ani}$.  This also implies that 
$I^{n+1}F/I^{n+2}F\cong\ZZ/2\times\ZZ/2$.\qed

\end{document}